\documentclass[10pt]{amsart}
\headheight=7pt
\textheight=574pt
\textwidth=400pt
\topmargin=14pt
\oddsidemargin=18pt
\evensidemargin=18pt
\usepackage{amsfonts,amsmath,amssymb,amsthm,latexsym,verbatim,url}
\newtheorem{theorem}{Theorem}[section]
\newtheorem{lemma}[theorem]{Lemma}

\newtheorem{corollary}[theorem]{Corollary}
\theoremstyle{remark}

\theoremstyle{definition}


\DeclareMathOperator{\Gal}{{\mathrm{Gal}}}
\DeclareMathOperator{\Aut}{{\mathrm{Aut}}}
\newcommand{\abs}[1]{|#1|}	
\newcommand{\F}{{\mathbb {F}}} 
\newcommand{\R}{{\mathbb {R}}} 
\newcommand{\Z}{{\mathbb {Z}}} 
\newcommand{\Q}{{\mathbb {Q}}} 
\newcommand{\C}{{\mathbb {C}}}
 
\newcommand{\Cay}{{\mathrm {Cay}}}
\newcommand{\GL}{{\mathrm {GL}}} 
\newcommand{\om}{{{\omega}}}

\begin{document}
\title[]{Large sets of strongly cospectral vertices in Cayley graphs}

\author{Peter Sin}
\address{Department of Mathematics\\University of Florida\\ P. O. Box 118105\\ Gainesville FL 32611\\ USA}
\thanks{This work was partially supported by a grant from the Simons Foundation (\#633214
 to Peter Sin)}
\thanks{}

\begin{abstract} Strong cospectrality is an equivalence relation on the
  set of vertices of a graph that is of importance in the
  study of quantum state transfer in graphs.  We construct  families of abelian Cayley
  graphs  in which the number of  mutually strongly cospectral vertices can be arbitrarily
  large.
\end{abstract}

\maketitle

\section{Introduction} Let $X$ be a simple graph with  adjacency matrix $A$, for some
fixed ordering of the vertex set $V(X)$.
For each  $a\in V(X)$, let $e_a$ denote its characteristic vector, the column
vector in $\C^{V(X)}$ with 1 in the $a$ entry and zero elsewhere.
We consider the spectral decomposition of $A$,
\begin{equation}
  A=\sum_{r=1}^k \theta_rE_r,
\end{equation}
where $\theta_1$,\ldots, $\theta_k$ are the distinct eigenvalues of $A$ and $E_r$
is the idempotent projector onto the $\theta_r$ eigenspace.

Two vertices $a$ and $b$ are said to be {\it strongly cospectral} if and only if
for all $r$ we have $E_re_a=\pm E_re_b$.
The terminology is justified  by the fact that the above condition implies that
$(E_r)_{a,a}=(E_r)_{b,b}$ for all $r$, which is one of several equivalent definitions of {\it cospectrality} of $a$ and $b$.

The significance of strong cospectrality in the study of quantum state transfer was first observed by Godsil in \cite{G12}.  A continuous time quantum walk on the graph $X$ is defined by the family of unitary matrices $U(t)=e^{-itA}$, $t\in \R$. We say that there is {\it perfect state transfer}
from $a$ to $b$ if, for some $t_0$, we have  $\abs{U(t_0)_{b,a}}=1$. It was shown \cite[2.2 Corollary]{G12} that if there is perfect state transfer from $a$ to $b$, then the two vertices must be strongly cospectral. Kay (\cite[IV.D]{K}) showed that if there is
perfect state transfer from $a$ to $b$ and from $a$ to $c$, then $b=c$.
It is not hard to see that the existence of perfect state transfer between vertices
defines an equivalence relation, and Kay's result means that the equivalence classes
contain at most two elements. For the weaker equivalence relation of strong cospectrality
small examples show that the equivalence classes can have size greater than 2.
For example, the cartesian product $P_2\square P_3$ of paths of lengths 2 and 3 has  a strong cospectrality class of 4 elements.
So a natural question, which appears as Problem 9 in Coutinho's thesis
\cite{CPhD}, is: What is the maximum size of a set of mutually strongly cospectral vertices of a graph?
In a multiple cartesian product of paths, if the path lengths are chosen suitably so that the product has simple eigenvalues, the ``corners'' form a large set of mutually strongly cospectral vertices,
which shows that strongly cospectral sets can be arbitrarily large in general. However,
the same question for vertex-transitive graphs has remained open.
Work on this latter question has so far focused on obtaining  upper bounds on the size of a strong cospectrality class in terms of other data, such as
 the maximum eigenvalue multiplicity \cite[Theorem 6.1]{AG}.
 We shall show that no absolute bound exists, by exhibiting Cayley graphs with arbitrarily large sets of mutually strongly cospectral vertices.

Our constructions are of abelian Cayley graphs. In \S\ref{Cayley} we shall review some general theory of Cayley graphs in relation to strong cospectrality, and state
the group-theoretic formulation of  strong cospectrality that we shall use exclusively. 
In \S\ref{hetero} we consider cartesian products of certain Cayley graphs of cyclic
groups whose orders are distinct powers of 2, each $\ge8$. By applying Galois theory
of the fields of 2-power roots of unity, we show that every involution
is strongly cospectral with the zero vertex. As shown in \cite[Corollary 3.3]{AG}, the set of vertices in any Cayley graph that are strongly cospectral with the identity element forms a subgroup, which we shall call the {\it strongly cospectral subgroup}.  Moreover, it is not difficult to see that elements that are strongly cospectral with the identity must be involutions, so this subgroup is an elementary abelian 2-group.
(See  the proof of \cite[Lemma 4.1]{AG} .)
Thus, in our examples, the strongly cospectral subgroup is as large as possible
and it can be arbitrarily large.

In \S\ref{cubelike} we construct cubelike graphs with
arbitrarily large strongly cospectral subgroups. Here we view the underlying elementary
abelian group as a vector space over the field $\F_2$ of two elements
so that we can apply some results on the geometry of quadratic forms over $\F_2$.
These graphs provide examples of arbitrarily large strongly cospectral classes
in graphs with integral eigenvalues.

In both constructions, in order to obtain a large strongly cospectral
subgroup, the whole group has to be much larger. This accords with earlier results such as \cite[Corollary 8.3]{AG}, which shows that in a cubelike graph the order of
the strongly cospectral subgroup is bounded by the square root of the group order.

Our examples raise some questions about {\it pretty good state transfer}, a condition on vertices which is intermediate in strength between perfect state transfer and strong cospectrality. We comment on these questions briefly in the final section. 

\section{Strongly cospectral vertices in Cayley graphs}\label{Cayley}
Let $G$ be a finite group and $S$ a subset of $G$. We shall always assume
that $S$ is closed under inversion and does not contain the identity element.
Then the Cayley graph $\Cay(G,S)$ with vertex set $G$
and connection set $S$ is a simple graph. 

By a {\it normal} Cayley graph we mean one in which
the connection set is a union of conjugacy classes.
The eigenvalues of a normal Cayley graph  $\Cay(G, S)$
are given by the irreducible  characters of $G$.
For any character $\chi$ of $G$ set $\chi(S):=\sum_{s\in S}\chi(s)$.
Then the eigenvalues  are the values
$\chi(S)/\chi(1)$, as $\chi$ varies over the irreducible characters of $G$, with
each $\chi$ contributing $\chi(1)^2$ to the total multiplicity of the eigenvalue.

As mentioned in the Introduction, the vertices in any Cayley graph 
that are strongly cospectral with the identity element form a subgroup,
called the {\it strongly cospectral subgroup}.
In a normal Cayley graph, this subgroup is a central
elementary abelian $2$-subgroup, by  \cite[Lemma 4.1]{AG}.

An important tool for us is  the following characterization  \cite[Theorem 2.4]{SS} of strong cospectrality in normal Cayley graphs. We shall need only the abelian case,
which is well known (See \cite[16.2]{CGbook} for example.) 

\begin{lemma}\label{strong} In a normal Cayley graph $\Cay(G,S)$, two vertices $g$ and $h$
  are strongly cospectral if and only if the following hold.
  \begin{enumerate}
  \item[(a)] $h=zg$ for some central involution $z$;
  \item[(b)] If $\chi$ and $\psi$ are  irreducible characters such that
    $\chi(S)/\chi(1)=\psi(S)/\psi(1)$, then $\chi(z)/\chi(1)=\psi(z)/\psi(1)$.
  \end{enumerate}
\end{lemma}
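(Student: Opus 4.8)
The plan is to translate the matrix condition defining strong cospectrality into the representation theory of the group algebra $\C[G]$, exploiting the fact that for a normal Cayley graph the adjacency matrix becomes multiplication by a central element. First I would identify $\C^{V(X)}=\C^G$ with $\C[G]$ by sending the characteristic vector $e_g$ to the group element $g$. Under this identification the adjacency matrix $A$ of $\Cay(G,S)$ is right multiplication by $\hat S=\sum_{s\in S}s$. Since $S$ is a union of conjugacy classes, $\hat S$ lies in the center of $\C[G]$, so by Schur's lemma it acts on the irreducible representation affording $\chi$ as the scalar $\chi(S)/\chi(1)$, where $\chi(S)=\sum_{s\in S}\chi(s)$. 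These scalars are exactly the distinct eigenvalues $\theta_r$ of $A$ (they are real because $S=S^{-1}$).

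Next I would express each spectral idempotent $E_r$ through the central primitive idempotents $e_\chi=\frac{\chi(1)}{\abs{G}}\sum_{g}\overline{\chi(g)}\,g$ of $\C[G]$. Grouping the $e_\chi$ by the eigenvalue they carry, set $f_r=\sum_{\chi:\,\chi(S)/\chi(1)=\theta_r}e_\chi$; because the $e_\chi$ are central, left and right multiplication by $f_r$ agree and realize precisely the spectral projection $E_r$ onto the $\theta_r$-eigenspace. Writing $h=zg$ with $z=hg^{-1}$, the condition $E_re_g=\pm E_re_h$ becomes $f_r\,g=\pm f_r z g$ in $\C[G]$; cancelling $g$ on the right reduces it to the requirement that, for every $r$, one has $f_r z=\varepsilon_r f_r$ for some sign $\varepsilon_r\in\{\pm 1\}$.

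Finally I would read (a) and (b) off these relations. As the $e_\chi$ are orthogonal idempotents, $f_r z=\varepsilon_r f_r$ forces $e_\chi z=\varepsilon_r e_\chi$ for each $\chi$ in the $r$-th class, which says that $z$ acts as the scalar $\varepsilon_r$ in the representation affording $\chi$; taking traces gives $\varepsilon_r=\chi(z)/\chi(1)$. Ranging over all $r$, the element $z$ acts as a scalar $\pm 1$ in every irreducible representation, whence $z$ is central and $z^2=1$, giving (a); and the fact that the sign $\chi(z)/\chi(1)$ depends only on $\theta_r=\chi(S)/\chi(1)$ is exactly the implication in (b). The converse is the same computation run backwards.

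I expect the only delicate point to be the bookkeeping of the left versus right regular action together with the signs $\varepsilon_r$, plus the check that grouping complex-conjugate characters (which share an eigenvalue) makes $f_r$ self-adjoint so that $E_r$ really is the spectral projector. In the abelian case that we actually use this all collapses: every $\chi$ is linear, every element is central, and $e_\chi z=\chi(z)e_\chi$, so the argument reduces to the observation that $\chi(z)\in\{\pm 1\}$ for all $\chi$ (which forces $z$ to be an involution) together with the class-constancy condition (b).
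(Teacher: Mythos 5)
Your proof is correct, but there is nothing in the paper to compare it against: the paper does not prove this lemma at all, it quotes it from \cite{SS} (Theorem 2.4 there), remarking that the abelian case is well known (\cite{CGbook}, 16.2). Your argument supplies the missing proof, and it is the standard one: identify $\C^{V}$ with $\C[G]$, observe that $\hat S=\sum_{s\in S}s$ is central because $S$ is a union of conjugacy classes, write each spectral idempotent as $f_r=\sum_{\chi}e_\chi$ over the characters sharing the eigenvalue $\theta_r=\chi(S)/\chi(1)$, and reduce $E_re_g=\pm E_re_h$ to $f_rz=\pm f_r$, i.e.\ to $\rho_\chi(z)=\pm I$ with constant sign on each eigenvalue class. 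Two small remarks. First, the ``delicate point'' you flag --- checking that $f_r$ is self-adjoint so that it really equals $E_r$ --- needs no separate verification: the $f_r$ are mutually orthogonal idempotents summing to the identity, commuting with $A$, whose images are exactly the eigenspaces, and the spectral decomposition of a diagonalizable operator with these properties is unique (take Lagrange interpolation polynomials in $A$), so $f_r=E_r$ automatically. Second, your reading of condition (b) silently corrects a typo in the statement: $\psi(z)\psi(1)$ should be $\psi(z)/\psi(1)$, which is what your argument proves and what the converse direction requires.
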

\qed

\section{Some Cayley graphs in heterocyclic groups}\label{hetero}
In this section we shall construct Cayley graphs with large strongly cospectral
subgroups. The graphs will be cartesian products of Cayley graphs on cyclic
groups of different orders. We begin by recalling some facts about
characters of cyclic groups. Then we shall define the cyclic Cayley graphs
of interest and examine first their eigenvalues, then those of their
cartesian products.

Let $Z_d$ denote a cyclic group of order $d$, written mutiplicatively.
We use the notation $C_m=\langle x_m\rangle$ for $Z_{2^m}$. We shall assume that $m\ge 3$.
Let $\om_m=\exp(\frac{2\pi i}{2^m})$, a primitive $2^m$-th root of unity in $\C$.
We identify the group $C_m^\vee$ of irreducible complex characters with
$\Z/2^m\Z$, where $a\in\Z/2^m\Z$ corresponds the the character $[a]: x_m\mapsto \om_m^a$.

We shall consider the fields $\Q(\om_m)$ and their subfields $F_m=\Q(\om_m+\om_m^{-1})$.
The following lemma summarizes some well known facts from Galois theory that
we shall need.
\begin{lemma}\label{galois}
  \begin{enumerate}
 \item[(a)]  $\Gal(\Q(\om_m)/\Q)=\langle\beta_m\rangle\times\langle\gamma_m\rangle\cong Z_2\times Z_{2^{m-2}}$,  where $\beta_m(\om_m)=\om_m^{-1}$ and $\gamma_m(\om_m)=\om_m^5$.
 \item[(b)] Let $\tau_m$ be the unique involution of $\langle\gamma_m\rangle$. Then
 $\tau_m(\om_m)=-\om_m$.
 \item[(c)] The restriction map $\Gal(\Q(\om_m)/\Q)\to \Gal(F_m/\Q)$  defines an
 isomorphism of $\langle\gamma_m\rangle$ with  $\Gal(F_m/\Q)$.
 \item[(d)] The field $F_{m-1}$ is the subfield of $F_m$ fixed by $\tau_m$.
  \end{enumerate}
\end{lemma}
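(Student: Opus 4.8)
The plan is to transport the entire statement into the unit group $(\Z/2^m\Z)^\times$ by means of the standard isomorphism $\Gal(\Q(\om_m)/\Q)\xrightarrow{\sim}(\Z/2^m\Z)^\times$ that sends an automorphism $\sigma$ to the unique odd residue $a$ with $\sigma(\om_m)=\om_m^a$. Under this correspondence $\beta_m\mapsto -1$ and $\gamma_m\mapsto 5$, so part (a) becomes the classical structure theorem for $(\Z/2^m\Z)^\times$ when $m\ge 3$. Here $\langle -1\rangle$ has order $2$, the subgroup $\langle 5\rangle$ is cyclic of order $2^{m-2}$, and $-1\notin\langle 5\rangle$ because every power of $5$ is congruent to $1\pmod 4$ while $-1\equiv 3\pmod4$; hence the product $\langle -1\rangle\times\langle 5\rangle$ is direct and has order $2^{m-1}=\abs{(\Z/2^m\Z)^\times}$, exhausting the group. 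I would either cite this structure theorem or reprove it by verifying that $5$ has multiplicative order exactly $2^{m-2}$ modulo $2^m$.

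For part (b), the unique involution of $\langle\gamma_m\rangle$ corresponds to $5^{2^{m-3}}$, so the real content is the congruence $5^{2^{m-3}}\equiv 1+2^{m-1}\pmod{2^m}$. I would establish the sharper statement $5^{2^k}\equiv 1+2^{k+2}\pmod{2^{k+3}}$ by induction on $k$: the base case $k=0$ is $5\equiv 1+4\pmod 8$, and if $5^{2^k}=1+2^{k+2}+t$ with $2^{k+3}\mid t$, then squaring gives $5^{2^{k+1}}=1+2^{k+3}+(\text{terms divisible by }2^{k+4})$, which advances the induction. Setting $k=m-3$ yields the claim, and since $\om_m^{2^{m-1}}=-1$ we conclude $\tau_m(\om_m)=\om_m^{5^{2^{m-3}}}=\om_m^{1+2^{m-1}}=-\om_m$. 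This single $2$-adic computation is the only genuine calculation in the lemma, and it is the step I expect to require the most care.

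Parts (c) and (d) then follow formally from the Galois correspondence. Complex conjugation is $\beta_m$, and its fixed field is exactly the maximal real subfield $F_m=\Q(\om_m+\om_m^{-1})$, so $\langle\beta_m\rangle=\Gal(\Q(\om_m)/F_m)$ is the kernel of the restriction homomorphism $\Gal(\Q(\om_m)/\Q)\to\Gal(F_m/\Q)$. Restricted to $\langle\gamma_m\rangle$ this map is injective because $\langle\gamma_m\rangle\cap\langle\beta_m\rangle=\{1\}$ by (a), and it is surjective by counting since $\abs{\langle\gamma_m\rangle}=2^{m-2}=[F_m:\Q]$, giving the isomorphism of (c). For (d) I would use $\om_m^2=\om_{m-1}$, so that $F_{m-1}=\Q(\om_m^2+\om_m^{-2})\subseteq F_m$ with $[F_m:F_{m-1}]=2$. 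Because $\tau_m(\om_m)=-\om_m$ fixes $\om_m^2+\om_m^{-2}$, the automorphism $\tau_m$ fixes $F_{m-1}$ pointwise; as the image of $\tau_m$ in $\Gal(F_m/\Q)$ has order $2$, its fixed field has degree $2^{m-3}=[F_{m-1}:\Q]$ over $\Q$, and therefore must coincide with $F_{m-1}$.
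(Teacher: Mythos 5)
Your proof is correct. The paper gives no proof of this lemma at all---it presents the statements as well-known facts from Galois theory, with the end-of-proof mark placed directly after the statement---and your argument supplies exactly the standard details being invoked: the identification $\Gal(\Q(\om_m)/\Q)\cong(\Z/2^m\Z)^\times=\langle -1\rangle\times\langle 5\rangle$ for $m\ge 3$, the congruence $5^{2^{m-3}}\equiv 1+2^{m-1}\pmod{2^m}$ (your $2$-adic induction is sound, and pushing it one step further to $k=m-2$ also yields that $5$ has order exactly $2^{m-2}$, the fact you deferred in part (a)), and the Galois correspondence arguments for parts (c) and (d).
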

\begin{proof} For the convenience of the reader we shall indicate a proof of (a). The other parts follow from (a) by standard Galois theory. Each element of the
  Galois group  $\Gal(\Q(\om_m)/\Q)$ defines an automorphism of the cyclic group
  $\langle \om_m\rangle$, so we have an injective group homomorphism
  $\Gal(\Q(\om_m)/\Q)\to\Aut(\langle \om_m\rangle)$. It is well known
  that $\Gal(\Q(\om_m)/\Q)$ and $\Aut(\langle \om_m\rangle)$ both have order
  $\phi(2^m)=2^{m-1}$, where $\phi$ is Euler's function, so the homomorphism
  is an isomorphism. The precise structure of the  automorphism group of a cyclic group of order $2^m$ can be found in group theory textbooks, for example \cite[23.3]{Asch}.
\end{proof}

In the group $C_m$, we consider the generating set
\begin{equation}\label{Tmdef}
T_m=\{x_m^{2i+1}\mid 0\leq i\leq 2^{m-3}-1\}\cup\{x_m^{-(2i+1)} \mid 0\leq i\leq 2^{m-3}-1\}.
\end{equation}

We shall use the following notation. For any subset $S$ of a group $G$ and any character $\lambda$ of $G$, we shall write $\lambda(S)$ to mean $\sum_{s\in S}\lambda(s)$.
Since $T_m$ is closed under inversion, it follows that $[a](T_m)\in F_m$ for all $a\in\Z/2^m\Z$.  

\begin{lemma}\label{even-odd}
  \begin{enumerate}
    \item[(a)] If $a$ is odd, then $[a](T_m)\neq 0$.
    \item[(b)] $\tau_m([a](T_m))=-[a](T_m)$  if $a$  is odd.
    \item[(c)]$[a](T_m) \in \Q$ if $a$ is even.
  \end{enumerate}
  \end{lemma}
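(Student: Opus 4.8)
The plan is to evaluate the Galois automorphism $\tau_m$ directly on the character sum, exploiting that every generator in $T_m$ is an odd power of $x_m$. First I would write the defining sum explicitly,
\begin{equation}
[a](T_m)=\sum_{i=0}^{2^{m-3}-1}\left(\om_m^{a(2i+1)}+\om_m^{-a(2i+1)}\right),
\end{equation}
and then use Lemma~\ref{galois}(b), which gives $\tau_m(\om_m)=-\om_m$ and hence $\tau_m(\om_m^k)=(-1)^k\om_m^k$ for every integer $k$. In this way the action of $\tau_m$ on each summand is governed entirely by the parity of the exponents $\pm a(2i+1)$, and both parts of the lemma will follow by separating the cases where $a$ is odd and where $a$ is even.

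For part (a), when $a$ is odd each exponent $a(2i+1)$ is a product of two odd integers, hence odd, so $\tau_m$ negates every summand and therefore $\tau_m([a](T_m))=-[a](T_m)$.

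For part (b), when $a$ is even every exponent is even, so $\tau_m$ fixes each summand and hence fixes $[a](T_m)$. By Lemma~\ref{galois}(d) this alone shows only that $[a](T_m)\in F_{m-1}$, so a further descent is required to reach $\Q$. The step I would take here is to write $a=2b$ and use $\om_m^2=\om_{m-1}$ to recast the sum as
\begin{equation}
[a](T_m)=\sum_{i=0}^{2^{m-3}-1}\left(\om_{m-1}^{b(2i+1)}+\om_{m-1}^{-b(2i+1)}\right).
\end{equation}
The combinatorial heart of the argument is that the exponent set $\{\,\pm(2i+1):0\le i\le 2^{m-3}-1\,\}$ sweeps out, without repetition, all odd residues modulo $2^{m-1}$: the values $1,3,\dots,2^{m-2}-1$ together with their negatives $2^{m-2}+1,\dots,2^{m-1}-1$ account for exactly the $2^{m-2}$ odd classes mod $2^{m-1}$. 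Consequently $[a](T_m)=\sum_{j}\om_{m-1}^{bj}$, the sum being over all odd $j$ modulo $2^{m-1}$.

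Finally I would conclude rationality of this last sum by a symmetry argument: any $\sigma\in\Gal(\Q(\om_{m-1})/\Q)$ acts by $\om_{m-1}\mapsto\om_{m-1}^{u}$ for some odd unit $u$, sending $\om_{m-1}^{bj}$ to $\om_{m-1}^{b(uj)}$, and since multiplication by $u$ permutes the odd residues mod $2^{m-1}$, the entire sum is fixed by every $\sigma$ and hence lies in $\Q$. (Equivalently, evaluating the geometric series directly yields the integer value $0$ or $\pm 2^{m-2}$.) The \emph{main obstacle} is precisely this descent in part (b): $\tau_m$-invariance only lands $[a](T_m)$ in $F_{m-1}$, and it is the explicit structure of $T_m$—that its exponents exhaust all odd classes modulo $2^{m-1}$—that forces the value all the way down to $\Q$.
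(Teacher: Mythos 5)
Your proposal is correct. Part (a) is identical to the paper's argument, and in part (b) you perform the same key reduction as the paper: writing $a=2b$, using $\om_m^2=\om_{m-1}$, and observing that the exponents $\pm(2i+1)$, $0\le i\le 2^{m-3}-1$, run exactly once through all $2^{m-2}$ odd residues modulo $2^{m-1}$. Where you diverge is the endgame. The paper recognizes $\sum_{k}\om_{m-1}^{r(2k+1)}$ as the character $[r]$ of $C_{m-1}$ summed over the nontrivial coset of the index-$2$ subgroup $\langle x_{m-1}^2\rangle$ and evaluates it explicitly, obtaining $2^{m-2}$, $-2^{m-2}$, or $0$ according to whether $r=0$, $r=2^{m-2}$, or neither. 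You instead finish with a softer symmetry argument: every element of $\Gal(\Q(\om_{m-1})/\Q)$ raises $\om_{m-1}$ to an odd unit, multiplication by which permutes the odd residues, so the sum is Galois-invariant and hence rational. Both are valid; your route is slightly cleaner if rationality is all one wants (and it is all that Lemma~\ref{mainlemma} and the Corollary's inductive step use), while the paper's explicit evaluation carries extra information — the sum is not merely rational but equals $0$ or $\pm 2^{m-2}$ — which is the kind of quantitative fact one would need, for instance, to decide when such character sums coincide or vanish. Your parenthetical remark that the geometric series evaluates to these integer values shows you saw this refinement as well.
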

\begin{proof}
  First observe that $[1](T_m)\neq 0$ as it is  the sum of positive real numbers $\om^i+\om^{-i}$. Then (a) holds because $[a](T_m)$ is a Galois conjugate of $[1](T_m)$.
    If $a$ is odd, then $[a](T_m)$ is a sum of odd powers of $\om_m$, so (b)
    follows from Lemma~\ref{galois}(b). If $a$ is even then we can write $a=2r$, with $0\leq r\leq 2^{m-1}-1$.
  Starting from the fact that $\om_m^2=\om_{m-1}$, it follows that
  $[2r](x_m)=\om_m^{2r}=\om_{m-1}^r$. From this, and the definition of $T_m$ we obtain
  \begin{equation}
      [2r](T_m)=\sum_{i=0}^{2^{m-3}-1} \om_{m-1}^{r(2i+1)}+\om_{m-1}^{-r(2i+1)}
      = \sum_{k=0}^{2^{m-2}-1}\om_{m-1}^{r(2k+1)}.
  \end{equation}
  The last sum is equal to the sum of values of the character $[r\pmod{2^{m-1}}]$
  of $C_{m-1}$, summed over the nontrivial coset of the unique subgroup
   $\langle x_{m-1}^2\rangle$ of index 2 in $C_{m-1}$.
    Therefore, we have
    \begin{equation}
    [2r](T_m)= \begin{cases}  2^{m-2}  \qquad \text{if $r=0$}\\
          -2^{m-2}  \qquad \text{if $r=2^{m-2}$}\\
          0, \qquad \text{if $r\notin\{0, 2^{m-2}\}$},
        \end{cases}
    \end{equation}
    which completes the proof of  (c).
\end{proof}

Let $J$ be a finite set of positive integers $j\ge 3$.
We shall be interested in the groups  $G_J=\bigoplus_{j\in J}C_j$. We identify the direct factors $C_j$ with their canonical images in  $G_J$ in the usual way. Then
$S_J=\cup_{j\in J} T_j$ is a generating set of $G_J$. The graph $\Cay(G_J, S_J)$
is the cartesian product $\square_{j\in J}\Cay(C_j,T_j)$. Our goal is to show, using Lemma~\ref{strong}, that the strongly cospectral subgroup of $\Cay(G_J, S_J)$ is the subgroup
of all elements whose square is the identity.

The characters of $G_J$ are given by tuples $a_J=([a_j])_{j\in J}$ with $a_j\in\Z/2^j\Z$. We have
\begin{equation}
  a_J(S_J)=\sum_{j\in J}[a_j](T_j).
\end{equation}

\begin{lemma}\label{mainlemma} Let $J$ be a finite set of positive integers $j\ge 3$.
  Let $a_J=([a_j])_{j\in J}$ and $b_J=([b_j])_{j\in J}$ be characters of $G_J$, and suppose that $a_J(S_J)=b_J(S_J)$. Assume that not all $a_j$ and $b_j$ are even
  and let $m\in J$ be the largest element for which
  either $a_m$ or $b_m$ is odd. Then both $a_m$ and $b_m$ are odd and
  $[a_m](T_m)=[b_m](T_m)$.
\end{lemma}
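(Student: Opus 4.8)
The plan is to exploit the tower of real cyclotomic fields $F_3\subseteq F_4\subseteq\cdots$ together with the single involution $\tau_m\in\Gal(F_m/\Q)$ supplied by Lemma~\ref{galois}. First I would rewrite the hypothesis $a_J(S_J)=b_J(S_J)$ as
\[
\sum_{j\in J}\bigl([a_j](T_j)-[b_j](T_j)\bigr)=0,
\]
and separate the indices according to size. By the choice of $m$, every $j\in J$ with $j>m$ has both $a_j$ and $b_j$ even, so by Lemma~\ref{even-odd}(b) each such summand lies in $\Q$; moving these to the right-hand side gives
\[
\sum_{\substack{j\in J\\ j\le m}}\bigl([a_j](T_j)-[b_j](T_j)\bigr)=c
\]
for some $c\in\Q$. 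The left-hand side now lives entirely in $F_m$, since $[a_j](T_j)\in F_j\subseteq F_m$ for $j\le m$, and in fact every term with $j<m$ already lies in the smaller field $F_{m-1}$.

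The key move is to apply the $\Q$-linear map $\xi\mapsto\xi-\tau_m(\xi)$ of $F_m$ to this identity. Since $c\in\Q$ it is annihilated, and since $F_{m-1}$ is exactly the fixed field of $\tau_m$ (Lemma~\ref{galois}(d)), every summand with $j<m$ is annihilated as well. Only the $j=m$ terms survive, leaving
\[
\bigl([a_m](T_m)-\tau_m([a_m](T_m))\bigr)-\bigl([b_m](T_m)-\tau_m([b_m](T_m))\bigr)=0.
\]
Lemma~\ref{even-odd} evaluates each bracket: if the index is odd the bracket equals $2[a_m](T_m)$, respectively $2[b_m](T_m)$, by part (a), while if it is even the bracket vanishes because the quantity is rational and hence $\tau_m$-fixed.

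Now I would split into cases. The case in which $a_m$ and $b_m$ are both even is excluded by the definition of $m$. If both are odd, the displayed identity reads $2[a_m](T_m)=2[b_m](T_m)$, which is precisely the desired conclusion. The remaining task, and the only real obstacle, is to rule out the mixed case, say $a_m$ odd and $b_m$ even, where the identity collapses to $[a_m](T_m)=0$. For this I need the nonvanishing statement that $[a](T_m)\ne0$ whenever $a$ is odd, which I expect to prove by direct summation: writing $\zeta=\om_m^a$, a primitive $2^m$-th root of unity since $a$ is a unit, and $N=2^{m-3}$, the two geometric series collapse to
\[
[a](T_m)=\frac{\zeta}{\zeta^2-1}\bigl(\zeta^{2N}-\zeta^{-2N}\bigr)=\frac{\zeta^{2N}-\zeta^{-2N}}{\zeta-\zeta^{-1}}.
\]
Here $\zeta^{2N}=\om_m^{a2^{m-2}}=i^{a}$, so for odd $a$ the numerator is $i^{a}-i^{-a}=\pm2i\ne0$, while the denominator is nonzero because $\zeta^2\ne1$. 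Hence $[a](T_m)\ne0$, the mixed case is impossible, and both $a_m$ and $b_m$ must be odd. Everything apart from this nonvanishing computation is routine Galois bookkeeping built on the two preceding lemmas; the crux is precisely that an odd character cannot make the partial sum $[a](T_m)$ vanish.
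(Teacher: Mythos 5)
Your proposal is correct, and its skeleton is the same Galois argument the paper uses: push the $j>m$ contributions into $\Q$ via Lemma~\ref{even-odd}(b), observe the $j<m$ contributions lie in $F_{m-1}$, the fixed field of $\tau_m$, and then use $\tau_m$ to isolate the $j=m$ terms (your operator $\xi\mapsto\xi-\tau_m(\xi)$ is just a repackaging of the paper's comparison of the two sides of $y=[b_m](T_m)-[a_m](T_m)$ under $\tau_m$). The genuine difference is your treatment of the mixed case ($a_m$ odd, $b_m$ even). The paper disposes of it in one line, asserting that $[b_m](T_m)-y=[a_m](T_m)$ is ``a contradiction'' because $\tau_m$ fixes the left side and negates the right; but an element both fixed and negated by $\tau_m$ is merely forced to be zero, so this is a contradiction only if one knows $[a](T_m)\neq 0$ for odd $a$. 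That nonvanishing is nowhere stated or proved in the paper, and it is genuinely needed: for even $b_m\notin\{0,2^{m-1}\}$ one has $[b_m](T_m)=0$ by the computation in Lemma~\ref{even-odd}, so if some odd character value also vanished, the lemma itself would be false (take $J=\{m\}$). You identified exactly this point and closed it with the geometric-series evaluation $[a](T_m)=\bigl(\zeta^{2N}-\zeta^{-2N}\bigr)/\bigl(\zeta-\zeta^{-1}\bigr)$ with $\zeta=\om_m^a$, $N=2^{m-3}$, whose numerator equals $i^a-i^{-a}=\pm 2i\neq 0$ for odd $a$; I checked this computation, including the edge case $m=3$, and it is right. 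So your write-up is not merely correct but is more complete than the paper's own proof on this one point, at the cost of the extra computation that the paper elides.
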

\begin{proof} Without loss of generality we may assume that $a_m$ is odd.
  By Lemma~\ref{even-odd}  $[a_j](T_j)$ and $[b_j](T_j)$
  are rational for $j>m$. For $j<m$, the values $[a_j](T_j)$ and $[b_j](T_j)$ lie in $F_{m-1}$. Let
  \begin{equation*}
  y=\sum_{\begin{smallmatrix}j\in J\\
      j\neq m
    \end{smallmatrix}} ([a_j](T_j)-[b_j](T_j)).
  \end{equation*}
  The equation $a_J(S_J)=b_J(S_J)$ is equivalent to
  \begin{equation}\label{yeq}
    y=[b_m](T_m)-[a_m](T_m).
  \end{equation}
Now  $y\in F_{m-1}$, the fixed field
  of $\tau_m$. If $b_m$ is odd  Lemma~\ref{even-odd} forces $y=0$, as the right
  side of (\ref{yeq})  is negated by $\tau_m$. If $b_m$ is even, we simply write
  (\ref{yeq}) as $[b_m](T_m)-y=[a_m](T_m)$ and obtain a contradiction, as $\tau$ fixes
  the left side of the latter equation and negates the right, by Lemma~\ref{even-odd}.
\end{proof}

\begin{corollary}\label{charsagree}Let $a_J=([a_j])_{j\in J}$ and $b_J=([b_j])_{j\in J}$ be characters of $G_J$, and suppose that $a_J(S_J)=b_J(S_J)$. Then for every $j\in J$,
  $a_j$ and $b_j$ are either both odd or both even. In particular $a_J(t)=b_J(t)$
  for every involution $t\in G_J$.
\end{corollary}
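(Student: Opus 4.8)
The plan is to prove the parity assertion by induction on $\abs{J}$, peeling off one coordinate at a time with Lemma~\ref{mainlemma}, and then to deduce the statement about involutions by a direct coordinatewise character computation. Since all of the Galois-theoretic work has already been absorbed into Lemma~\ref{mainlemma}, I expect the only real subtlety to be making the induction honest: the maximal index $m$ produced by the lemma accounts for just one odd coordinate at a time, so the reduction must genuinely replace $J$ by a strictly smaller index set before the lemma is reapplied.

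For the induction, the base case is the situation in which all the $a_j$ and $b_j$ are even, where the parity claim is immediate; this also disposes of the empty index set. So suppose not all of them are even and let $m\in J$ be the largest index for which $a_m$ or $b_m$ is odd. First I would note that for every $j>m$ the maximality of $m$ forces both $a_j$ and $b_j$ to be even, so those coordinates already match in parity. Lemma~\ref{mainlemma} then supplies that $a_m$ and $b_m$ are both odd and that $[a_m](T_m)=[b_m](T_m)$, so the coordinate $m$ matches as well. The reduction step is to subtract these equal terms from the identity $a_J(S_J)=\sum_{j\in J}[a_j](T_j)=b_J(S_J)$, which leaves $a_{J'}(S_{J'})=b_{J'}(S_{J'})$, where $J'=J\setminus\{m\}$ and $a_{J'},b_{J'}$ are the restrictions of $a_J,b_J$ to $G_{J'}$. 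The inductive hypothesis applied to $J'$ gives matching parity for every $j\in J'$, and together with the coordinates $j\ge m$ handled above this proves the parity claim for all of $J$.

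Finally, for the statement about involutions I would use that $G_J=\bigoplus_{j\in J}C_j$, so an involution $t$ has $j$-component $t_j\in\{1,x_j^{2^{j-1}}\}$, and that characters are multiplicative, giving $a_J(t)=\prod_{j\in J}[a_j](t_j)$ and likewise for $b_J$. It therefore suffices to check $[a_j](t_j)=[b_j](t_j)$ for each $j$. If $t_j=1$ both sides equal $1$; if $t_j=x_j^{2^{j-1}}$ then $[a_j](t_j)=\om_j^{a_j2^{j-1}}=(-1)^{a_j}$, and similarly $(-1)^{b_j}$ for $b_J$. Since $a_j$ and $b_j$ have the same parity by the first part, these values agree coordinatewise, whence $a_J(t)=b_J(t)$.
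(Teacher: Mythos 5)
Your proof is correct and follows essentially the same route as the paper's: induction on $\abs{J}$, applying Lemma~\ref{mainlemma} to the largest index carrying an odd entry, cancelling $[a_m](T_m)=[b_m](T_m)$ from $a_J(S_J)=b_J(S_J)$, and recursing on $J\setminus\{m\}$. The only cosmetic differences are that the paper anchors the induction at $\abs{J}=1$ via Lemma~\ref{even-odd} while you bottom out at the all-even (or empty) case, and that you spell out the coordinatewise involution computation $[a_j](x_j^{2^{j-1}})=(-1)^{a_j}$ that the paper leaves implicit.
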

\begin{proof} The proof is by induction on $\abs{J}$. If $J=\{j\}$, then $G_J$
  is a cyclic group and the statement follows from Lemma~\ref{even-odd}
  (using the fact that $j\ge 3$). Suppose $\abs{J}>1$ and the statement
  holds for smaller $J$. If all $a_j$ and $b_j$ are even
  there is nothing to prove. Otherwise, we can apply Lemma~\ref{mainlemma} to the
  largest element $m\in J$ for which either $a_m$ or $b_m$ is odd,
  and then cancel $[a_m](T_m)=[b_m](T_m)$ from the equation
  $a_J(S_J)=b_J(S_J)$, to get $a_{J'}(S_{J'})=b_{J'}(S_{J'})$, where
  $J'=J\setminus\{m\}$. The statement now follows by induction.
\end{proof}

Our results can be summarized in the following theorem.
\begin{theorem} Let $J$ be a finite set of positive integers $j\ge 3$.
  For each $j\in J$ let $C_j$ be a cyclic group of order $2^j$ and
  let $T_j$ be the subset of $C_j$ defined in \eqref{Tmdef}. Then in the Cayley graph $\Cay(G_J,S_J)=\square_{j\in J}\Cay(C_j, T_j)$  every  element whose square is the identity belongs to the strongly cospectral subgroup.
  Hence the strongly cospectral subgroup has order $2^{\abs{J}}$.
\end{theorem}
\begin{proof}
  By Corollary~\ref{charsagree} and Lemma~\ref{strong}, all involutions of
  $\Cay(G_J,S_J)$ belong  to the strongly cospectral subgroup. On the other hand, all nonidentity elements  of this subgroup must be involutions, by Lemma~\ref{strong}(a).
\end{proof}

\section{Strongly cospectral subgroups in cubelike graphs}\label{cubelike}
A cubelike graph is defined as a Cayley graph $\Cay(G,S)$ where $G$ is an elementary abelian $2$-group and the connecting set  $S$ is any subset of $G$ that does not contain
the identity element. Let $\abs{G}=2^n$. We will identify $G$ with the additive group of the vector space $\mathbb F_2^n$ over $\mathbb F_2$, so we may speak of hyperplanes instead of subgroups of index $2$, make use of the dot product and, later on, quadratic forms.

In this section we shall construct cubelike graphs with large strongly cospectral subgroups. The graphs will be cartesian products of smaller cubelike graphs, with each factor
contributing (at least) one dimension to the $\mathbb F_2$-dimension of the strongly
cospectral group. We first review characters of cubelike graphs, from a geometric
point of view. Then, to illustrate the main idea, we look at  a simple example with only three cartesian factors and complete connecting sets. While this initial attempt does not
produce a large strongly cospectral subgroup,  we modify the construction to make it work by using connecting sets defined by nondegenerate quadratic forms in odd dimensions.

\subsection{Characters} 
For $w\in G$, $\chi_w:G\to\{\pm1\}$ is the character defined by
$\chi_w(x)=(-1)^{w\cdot x}$.
Then the eigenvalues of $\Cay(G,S)$, counted with multiplicity, are the $2^n$ values
$$
\chi_w(S):=\sum_{s\in S}\chi_w(S)=\sum_{s\in S}(-1)^{w\cdot s}=
\abs{S}-2n_w,
$$
where $n_w=\abs{\{s\in S\mid w\cdot s=1\}}$.

We have $n_w=\abs{S}-\abs{S\cap H_w}$ where, for $w\neq 0$, $H_w$ is the hyperplane
orthogonal to $w$ with respect to the dot product, and $H_0=G$.

By Lemma~\ref{strong} an element $z$ belongs to the strongly cospectral  subgroup if and only if any two characters $\chi_w$ and $\chi_u$  that give the same
eigenvalue $\chi_w(S)=\chi_u(S)$  satisfy $\chi_w(z)=\chi_u(z)$.

Let $\sigma=\sum_{s\in S}s$, where we mean the sum in the group $G$.
Then
$$
\chi_w(\sigma)=\prod_{s\in S}(-1)^{w\cdot s}=(-1)^{n_w}.
$$
In particular, we see that for any $w$, the eigenvalue
$\chi_w(S)$ determines $n_w$, which determines $\chi_w(\sigma)$. 
Thus, the element $\sigma$ belongs to the strongly cospectral subgroup,
This idea can be traced back to \cite{BGS}.

\subsection{Plan for construction}\label{idea}
We begin by outlining the general strategy for extending the above idea to
create more elements in the strongly cospectral subgroup. 
For the purpose of illustration,
suppose $S=S_1\cup S_2\cup S_3$ is the disjoint union of three
subsets. Let $\sigma_i$ be the sum in $G$ of the elements of $S_i$.
Now, suppose that for all $w\in G$,  $\abs{H_w\cap S}$ determines
$\abs{H_w\cap S_i}$ for all $i$.
Then $\chi_w(S)$ determines $n_w$, which determines $\abs{H_w\cap S}$,
which determines $\abs{H_w\cap S_i}$ for all $i$, which determines
$\chi_w(\sigma_i)$. Thus, by Lemma~\ref{strong}, the elements $\sigma_i$ are cospectral to $0$ (although they may be equal to zero in some cases).
If we can find $S$ and $S_i$ as above such that
the $\sigma_i$ generate a group of order 8, we will have
an example of a strongly cospectral subgroup of order 8.
Of course, we can try to generalize to $k$ subsets $S_i$ to construct
a strongly cospectral subgroup of order $2^k$.

We next consider a prototype for this idea (in which unfortunately the $\sigma_i$ are equal to $0$).
Suppose we write $n=n_1 +n_2 +n_3$, with $n_1 \ll n_2 \ll n_3$.
Then accordingly, we can decompose $G$ as
$G= V_1\oplus V_2\oplus V_3$, with $\dim_{\mathbb F_2}V_i=n_i$.
We view this direct sum internally, so $V_2$ is the set of triples 
$(0,v,0)$, with $v\in\mathbb F_2^{n_2}$.
Let $S_i=V_i\setminus\{0\}$ and $S=\cup_{i=1}^3 S_i$.
The graph $\Cay(G, S)$ is the cartesian product $\square_{i=1}^3\Cay(V_i,S_i)$.
Then for $w\in G$, we have $H_w\cap S_i=2^{n_i}-1$ or $2^{n_i-1}-1$ depending on whether $V_i\subseteq H_w$ or not. Hence, as $w$ varies, there are 8 possibilites for the sequence
$\{\abs{H_w\cap S_i}\}_{i=1}^3$. As long as the $n_i$ are chosen properly, say with $n_3$ very large, $n_2$ moderate and $n_1$ small, we can tell from
$\abs{H_w\cap S}$ which of the 8 sequences we have. That is to say, $\abs{H_w\cap S}$ determines the $\abs{H_w\cap S_i}$.

In this prototype, since $S_i$ is permuted by  $\GL(V_i)$,
the sum $\sigma_i$ of its elements is fixed by $\GL(V_i)$, so  $\sigma_i=0$ and
we do not have a working construction yet.
We see then that in order to make the construction work
we will need to choose the connection set $S_i$ of each
cartesian factor so that its setwise stabilizer in $\GL(V_i)$
has a nonzero fixed point. At the same time we wish to preserve the crucial property
of our prototype that for each $H_w$, the intersection
sizes $\abs{H_w\cap S_i}$ are all determined by $\abs{H_w\cap S}$. 
The solution is provided by quadratic forms over $\mathbb F_2$.

\subsection{Quadratic forms over $\F_2$}
We refer the reader to \cite{Kap} and \cite{Dye} for the basic theory of  quadratic forms
over $\F_2$, but we also point out that many facts that we use can be
verified by direct computation. For example, the zeros of a form given in coordinates
are easy to count, as the form takes only two values.
Let $V=\mathbb F_2^d$, where $d=2e+1$, $e\ge 1$.
On $V$ we take coordinates $x_1$,\ldots, $x_d$ and consider the
quadratic form
\begin{equation}\label{qform}
  q(x_1,\ldots,x_d)= x_d^2+ \sum_{i=1}^e x_ix_{e+i}.
\end{equation}
The bilinear form $b(v,v')=q(v+v')-q(v)-q(v')$ associated with $q$ has a 1-dimensional
radical $\langle p\rangle$, where $p=(0,\ldots,0,1)$, called the {\it nucleus} of $q$.
Note that $q(p)=1$.

A subspace is called {\it totally isotropic} with respect to a quadratic form
if the restriction of the form to the subspace vanishes entirely.
For nondegenerate quadratic forms over $\F_2$ in even dimension $2m$ the dimension of
a maximal totally isotropic subspace (also called the Witt index)
is either $m$ or $m-1$ \cite[Theorem 27, p. 33]{Kap}.
The forms with Witt index $m$ are called {\it hyperbolic}
and those with Witt index $m-1$ are called {\it elliptic}.
Hyperbolic and elliptic forms are also said to have Arf invariant 0 and 1 respectively.
It will be useful for us to consider the subspace $W$ defined by $x_d=0$
and the restriction of the form $q$ to $W$, which we shall denote by $f$.
If we use the same notation $x_i$ for the coordinate restricted to $W$,
then $f(x_1,\ldots,x_{2e})=\sum_{i=1}^e x_ix_{e+i}$.
The form $f$ is hyperbolic, as the subspace defined by the vanishing of the
first $e$ coordinates is totally isotropic.

Let $Q$ be the set of  zeros of $q$ in $V\setminus\{0\}$.
We shall compute the sum $\sigma_Q=\sum_{v\in Q}v$ in $V$.
  Let vectors in $V$ be written as $(v,a)$ with first component $v\in W$
  and second component $a\in\mathbb F_2$. Then $(v,a)$  belongs to $Q$ iff $f(v)=a$.

\begin{lemma}\label{sumQ} 
  If $d=3$, then  $\sigma_Q=(0,0,1)$ and for $d\ge 5$ we have $\sigma_Q=0$ . 
\end{lemma}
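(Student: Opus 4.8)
The plan is to compute $\sigma_Q$ one coordinate at a time, using the description of $Q$ just given. Writing a vector of $V$ as $(w,a)$ with $w\in W$ and $a\in\F_2$, the relation $a^2=a$ over $\F_2$ turns the defining condition $q(w,a)=a+f(w)=0$ into $a=f(w)$. Hence, apart from the excluded origin, $Q$ is the disjoint union of the points $(w,0)$ with $w\neq 0$ and $f(w)=0$, together with the points $(w,1)$ with $f(w)=1$.

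First I would treat the $W$-component of $\sigma_Q$. Since the origin contributes nothing to a sum, this component equals $\sum_{f(w)=0}w+\sum_{f(w)=1}w=\sum_{w\in W}w$, the sum of all vectors of $W=\F_2^{2e}$. Because $2e\ge 2$, each coordinate equals $1$ in exactly $2^{2e-1}$ of these vectors, an even number, so this sum is $0$. Thus the $W$-component of $\sigma_Q$ vanishes in every case, and it only remains to determine the last coordinate.

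The last coordinate of $\sigma_Q$ is the number of points $(w,1)\in Q$ reduced modulo $2$, namely $\abs{N_1}\bmod 2$, where $N_1=\{w\in W\mid f(w)=1\}$. To compute $\abs{N_1}$ I would use the character sum $\sum_{w\in W}(-1)^{f(w)}=\abs{N_0}-\abs{N_1}$, with $N_0$ the set of zeros of $f$ in $W$. Since $f=\sum_{i=1}^e x_ix_{e+i}$ splits over disjoint pairs of variables, this sum factors as $\prod_{i=1}^e\bigl(\sum_{x,y\in\F_2}(-1)^{xy}\bigr)=2^e$. Combined with $\abs{N_0}+\abs{N_1}=2^{2e}$ this gives $\abs{N_1}=2^{2e-1}-2^{e-1}=2^{e-1}(2^e-1)$.

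Finally, reading off the parity: $\abs{N_1}$ is odd exactly when $e=1$. Hence for $d=3$ (that is, $e=1$) the last coordinate is $1$ and $\sigma_Q=(0,0,1)$, while for $d\ge 5$ (that is, $e\ge 2$) the factor $2^{e-1}$ is even, the last coordinate is $0$, and $\sigma_Q=0$. The only step requiring any real input is the evaluation of the hyperbolic character sum, but as $f$ is a direct sum of hyperbolic planes this reduces to the single elementary identity $\sum_{x,y\in\F_2}(-1)^{xy}=2$, so I do not anticipate a genuine obstacle.
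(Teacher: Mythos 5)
Your proof is correct and takes essentially the same approach as the paper: write $\sigma_Q=\sum_{w\in W}(w,f(w))$, observe that the $W$-component $\sum_{w\in W}w$ vanishes, and reduce the last coordinate to the parity of the number $2^{2e-1}-2^{e-1}$ of nonzeros of $f$. The only differences are cosmetic — you derive that count via the factored character sum $\prod_{i=1}^e\bigl(\sum_{x,y\in\F_2}(-1)^{xy}\bigr)=2^e$ rather than quoting the standard hyperbolic quadric count, and your argument handles $d=3$ uniformly where the paper disposes of it by a separate short computation.
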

\begin{proof} The result for $d=3$ can be seen by a short computation, so we assume $d\ge 5$ or, equivalently, $e\ge 2$. We have $\sigma_Q=\sum_{v\in W}(v,f(v))$.  
  Certainly, $\sum_{v\in W}v=0$ and, since  the
  number of nonzeros of the hyperbolic form $f$ is $2^{2e-1}-2^{e-1}$  \cite[Lemma 9.4.1]{BCN}, which is even,  we also have  $\sum_{v\in W}f(v)=0$. Thus $\sigma_Q=0$. 
  \end{proof}

By the Lemma, if $d\ge 5$ and we set $S'=Q\cup\{p\}$, then
$\sigma_{S'}:=\sigma_Q+p=p\neq 0$.

\subsection{Cubelike graphs from quadratic forms}\label{qgraphs}
We are now ready to  introduce our  cubelike graphs.
Let $k$ be a positive integer and 
  $G=V_1\oplus V_2\oplus\cdots\oplus V_k$, in which $V_i$ has odd dimension
  $n_i=2e_i+1$, with $n_i\ge 5$.
  In each $V_i$, we consider a  quadratic form as in \eqref{qform} (with $d=n_i$)
  and let $S_i=Q_i\cup\{p_i\}$ be the set consisting of the zeros of the form in
  $V_i\setminus\{0\}$ together with the nonzero vector in its nucleus.
  If we view $G$ as an internal direct sum and set  $S=\cup_{i=1}^kS_i$, then
  $\Cay(G,S)$ is the cartesian product $\square_{i=1}^k\Cay(V_i,S_i)$.
  Our  goal is to show that $\Cay(G,S)$ has a strongly cospectral group
of order $\ge 2^k$ if the dimensions $n_i$ are chosen appropriately.

 The next lemma determines the possible sizes of the
intersections of the sets $S_i$ with hyperplanes of $G$. 

\begin{lemma}\label{hyperplane_cuts}
  Let $V$ be a  subspace of $G=\F_2^n$, with $\dim V=d=2e+1\ge 5$.
  Let $Q$ be the set  zeros  in $V\setminus\{0\}$ of  a nondegenerate
quadratic form $q$ as given in \eqref{qform},
and let $p$ be the unique nonzero vector in its nucleus.
We set $S'=Q\cup\{p\}$. Let $H$ be either $G$ or a hyperplane of $G$. Then 
$\abs{H\cap S'}$ is given as follows.
\begin{enumerate}
\item[(i)] If $H$ contains $V$ then $\abs{H\cap S'}=\abs{S'}=2^{d-1}-1 +1=2^{d-1}$.

\item[(ii)] If $H$ does not contain $V$ but contains $p$, then $\abs{H\cap S'}=2(2^{d-3}-1) +1=2^{d-2}-1$.

\item[(iii)] If $H$ contains neither $V$ nor $p$ and the restriction of $q$ to $H\cap V$
is hyperbolic then $\abs{H\cap S'}=2^{d-2}+2^{e-1}-1$. 

\item[(iv)] If $H$ contains neither $V$ nor $p$ and the restriction of $q$ to $H\cap V$
  is elliptic  then  $\abs{H\cap S'}=2^{d-2}-2^{e-1}-1$.
  
\item[(v)] As $H$ and $H'$ vary over the set whose elements are $G$ and the hyperplanes
  of $G$  the minimum nonzero value of $\abs{\abs{H\cap S'}-\abs{H'\cap S'}}$ 
  is $2^{e-1}$.
\end{enumerate}

\end{lemma}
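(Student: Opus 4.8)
The plan is to reduce everything to the geometry of the quadric inside $V$ and then read off part (v) as arithmetic. Since $S'\subseteq V$, we have $H\cap S'=(H\cap V)\cap S'$, and because $\dim V=d$ while $H$ is a hyperplane of $G$, the intersection $H\cap V$ is either all of $V$ or a hyperplane of $V$. The first possibility gives (i) at once, since then $H\cap S'=S'$ and $\abs{S'}=\abs{Q}+1=(2^{d-1}-1)+1$, using that a nondegenerate quadratic form in odd dimension $d=2e+1$ over $\F_2$ has exactly $2^{d-1}$ zeros. So I would spend the bulk of the argument on the case where $U:=H\cap V$ is a hyperplane of $V$, splitting according to whether $U$ contains the nucleus $p$.

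The key structural tool is that the bilinear form $b$ has radical $\langle p\rangle$ and therefore descends to a \emph{nondegenerate} symplectic form on $\bar V=V/\langle p\rangle$, which has even dimension $2e$. When $p\notin U$, the hyperplane $U$ maps isomorphically onto $\bar V$, so $b|_U$ is nondegenerate; hence $q|_U$ is a nondegenerate quadratic form on the even-dimensional space $U$, and is therefore of hyperbolic or of elliptic type. I would then invoke the standard zero counts over $\F_2$: a nondegenerate hyperbolic (resp.\ elliptic) form in dimension $2e$ has $2^{2e-1}+2^{e-1}$ (resp.\ $2^{2e-1}-2^{e-1}$) zeros including $0$. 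Subtracting the zero vector gives $\abs{U\cap Q}$, and since $p\notin U$ contributes nothing further, this yields the values in (iii) and (iv) directly (recall $d-2=2e-1$).

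The delicate case is (ii), where $p\in U$; this is the step I expect to be the main obstacle, since $q|_U$ is now a singular form and one must count its zeros carefully and then correctly adjoin $p$. Here I would exploit the translation $v\mapsto v+p$, which preserves $U$ and satisfies $q(v+p)=q(v)+q(p)+b(v,p)=q(v)+1$, because $p$ lies in the radical of $b$ and $q(p)=1$. Identifying $q|_U$ with the pullback of the parabolic section one dimension lower, I would count the zeros of $q|_U$, strip the zero vector, and adjoin the point $p$ to reach the stated value $2(2^{d-3}-1)+1=2^{d-2}-1$. The only subtlety to watch is the bookkeeping of the radical direction and of $p$ itself, which is exactly where an off-by-one could slip in, so I would pin this down on a small explicit instance (say $d=5$) before trusting the general formula.

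Finally, part (v) follows purely by arithmetic once the four values in (i)--(iv) are in hand. I would list the attainable sizes $2^{d-1}$, $2^{d-2}-1$, $2^{d-2}+2^{e-1}-1$ and $2^{d-2}-2^{e-1}-1$, observe that the last three cluster tightly around $2^{d-2}$ while the first is far away, and compute all pairwise differences. The smallest nonzero differences occur between the ``through $p$'' value and each of the two off-$p$ quadric values, both equal to $2^{e-1}$, whereas the hyperbolic/elliptic gap is $2^e$ and every difference involving case (i) has size at least $2^{2e-1}$. Hence the minimum nonzero value is $2^{e-1}$, provided one also checks that the hyperbolic, elliptic, and through-$p$ sections are each genuinely realized by some hyperplane of $G$; I would verify this realizability as part of confirming that the minimum is attained rather than merely bounded below.
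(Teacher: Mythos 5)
Your reduction to $U:=H\cap V$ and your handling of (i), (iii) and (iv) are correct and essentially identical to the paper's proof. The problem is case (ii) --- precisely the spot you flagged as the likely home of an off-by-one --- and there your writeup is internally inconsistent: the translation argument you describe does \emph{not} give the stated value. Since $p$ lies in the radical of $b$ and $q(p)=1$, the involution $u\mapsto u+p$ of $U$ exchanges zeros and non-zeros of $q|_U$, so $q|_U$ has exactly $\abs{U}/2=2^{d-2}$ zeros (including $0$). Stripping $0$ gives $\abs{U\cap Q}=2^{d-2}-1$, and adjoining $p$ (which lies in $U$ but not in $Q$) gives $\abs{H\cap S'}=2^{d-2}$, not $2^{d-2}-1$. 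Had you carried out the $d=5$ check you promised, you would have seen this: with $q=x_5+x_1x_3+x_2x_4$ and $H$ meeting $V$ in $U=\{x_1=0\}$, the zeros of $q$ in $U$ are the $8$ vectors $(0,x_2,x_3,x_4,x_2x_4)$, so $\abs{H\cap S'}=7+1=8=2^{d-2}$, whereas (ii) asserts $7$.

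In other words, your method is right and the lemma as stated is wrong; the paper's own proof of (ii) commits exactly the oversight you were worried about. It passes to $(H\cap V)/\langle v\rangle$, where $v$ is the isotropic vector spanning, together with $p$, the radical of $b|_U$, and counts two preimages over each of the $2^{d-3}-1$ nonzero points of the quotient quadric plus the single point $p$; but it forgets the fiber over $\bar 0$, which contributes the nonzero isotropic vector $v$ itself. Restoring it gives $2(2^{d-3}-1)+1+1=2^{d-2}$. This propagates to (v): the attainable values are $2^{d-1}$, $2^{d-2}$ and $2^{d-2}\pm 2^{e-1}-1$, whose minimum nonzero difference is $2^{e-1}-1$ (for $e\ge 2$; when $e=1$ the values in (ii) and (iii) coincide and the minimum gap is $2$), not $2^{e-1}$. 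So your claim that the two smallest gaps are ``both equal to $2^{e-1}$'' is an artifact of the erroneous value in (ii). Fortunately the error is harmless downstream: the subsequent lemma only requires $\epsilon_i\ge 2^{n_{i-1}+2}$, and with $\epsilon_i=2^{e_i-1}-1$ one can still choose the dimensions $n_i$ recursively so that \eqref{ebound} holds, so the construction of arbitrarily large strongly cospectral subgroups is unaffected.
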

\begin{proof} (i) Follows from the fact that projection with respect to the nucleus
  maps $Q$ bijectively onto the set of nonzero vectors of $V/\langle p\rangle$.
  Suppose we are in case (ii). Then the restriction of
the associated symplectic form to $H\cap V$ has a 2-dimensional radical generated by
$p$ and $v$, say. Either $v$ or $p+v$ is isotropic for $q$, so we can assume that
$v$ was chosen to be isotropic. 
Then the the restriction of $q$ to $H\cap V$ induces  a nondegenerate quadratic
form on $(H\cap V)/\langle v\rangle$, analogous to $q$ but in dimension $d-2$.
We then find, using (i), that $\abs{H\cap S'}=2(2^{d-3}-1) +1=2^{d-2}-1$.
For parts (iii) and (iv), the restriction of the form $q$ to $H\cap V$ is
a nondegenerate quadratic form on the $2e$-dimensional space $H\cap V$.
The numbers stated in (iii) and (iv) are the numbers of
nonzero vectors on which such a form vanishes in the hyperbolic and elliptic cases, respectively \cite[Lemma 9.4.1]{BCN}. Part (v) follows from the previous parts.
\end{proof}

The following lemma, which applies more generally to a cartesian product
$\square_{i=1}^k\Cay(V_i,S_i)$ with arbitrary connecting sets $S_i$,
provides a sufficient condition for $\abs{H\cap S}$ to determine
all $\abs{H\cap S_i}$, whenever $H$ is either a hyperplane of $G$ or equal to $G$.

\begin{lemma}\label{capdetermine} Suppose $G=V_1\oplus V_2\oplus\cdots\oplus V_k$,
  where each $V_i$ is an $\F_2$-vector space. Let $n_i:=\dim V_i$.
  Let $S_i$ be any subset of $V_i\setminus\{0\}$ and $S=\cup_{i=1}^k S_i$. Let
    $N_i=\{\abs{H'\cap S_i} \mid \text{$H'$ a hyperplane of $G$ or $H'=G$}\}$ and  $\epsilon_i=\min\{\abs{a-b} \mid a, b\in N_i, a\neq b\}$. Suppose
    \begin{equation}\label{ebound}
      \epsilon_i\ge 2^{n_{i-1}+2}\quad\text{ for all $i=2$,\dots,$k$}.
    \end{equation}
    Then if $H$ is a hyperplane of $G$ or $H=G$,
    $\abs{H\cap S}$ determines $\abs{H\cap S_i}$ for all $i=1$,\dots,$k$.
  \end{lemma}
  \begin{proof} We use induction on $k$, the result being trivial for $k=1$.
    Assume $k>1$ and let $S'=\cup_{i=1}^{k-1}S_i$. Let $G'$ be the subgroup of $G$ consisting of elements whose  $k$-th component is zero.  Then $S'\subseteq G'$.
    Let $H$ be either equal to $G$ or a hyperplane of $G$. Then $H\cap G'$ is either
    equal to $G'$ or a hyperplane of $G'$, and $H\cap S'=(H\cap G')\cap S'$.
    Therefore, by the inductive hypothesis, $\abs{H\cap S'}$ determines $\abs{H\cap S_i}$ for all $i=1$,\dots, $k-1$. As $\abs{H\cap S}=\abs{H\cap S'}+\abs{H\cap S_k}$
    it suffices to show that $\abs{H\cap S}$ determines $\abs{H\cap S_k}$. The condition \eqref{ebound} implies that the $n_i$ form a strictly increasing sequence, and that 
    $$
    \abs{H\cap S}-\abs{H\cap S_k}=
    \sum_{i=1}^{k-1}\abs{H\cap S_i} < \sum_{i=1}^{k-1}2^{n_i}
    \le\sum_{i=1}^{n_{k-1}}2^i < 2^{n_{k-1}+1} \le \epsilon_k/2.
    $$  
    Therefore, $\abs{H\cap S_k}$ is the unique element of $N_k$ closest to
    $\abs{H\cap S}$.
  \end{proof}

  We now have all the ingredients to state and prove our theorem
  about the strongly cospectral subgroup of the cubelike graph
  $\Cay(G,S)$ introduced at the beginning of this subsection.
 
  \begin{theorem} Let $\Cay(G, S)$ be the graph defined at the beginning of
    subsection~\ref{qgraphs}. Suppose that the dimensions $\dim V_i=n_i$ satisfy the condition that for all $i=2$,..,$k$, we have $n_i>2n_{i-1}+7$.
    Then the order of the  strongly cospectral subgroup of
    $\Cay(G,S)=\square_{i=1}^k\Cay(V_i,S_i)$ is at least $2^k$.
    There exist arbitrarily large sets of mutually strongly cospectral
  vertices in cubelike graphs.
  \end{theorem}
  \begin{proof} By Lemma~\ref{hyperplane_cuts}(v), our hypothesis
    implies that the condition \eqref{ebound} of Lemma~\ref{capdetermine} is satisfied.
    Therefore if $H$ is either $G$ or a hyperplane of $G$,  $\abs{H\cap S}$  determines
    $\abs{H\cap S_i}$ for $i=1$,\dots, $k$, which means that each
    $\sigma_i=\sum_{s\in S_i}s$ belongs to the strongly cospectral subgroup, according to
    our discussion at the beginning of subsection~\ref{idea}.
    It follows from  Lemma~\ref{sumQ} that $\sigma_i=p_i$.  Since the $p_i$
    are linearly independent, the dimension of the strongly cospectral subgroup
    is at least $k$. The final statement holds because there is no restriction on the number $k$ of factors,  and the dimensions $n_i$ can be chosen recursively to satisfy the hypothesis.    
  \end{proof}

  \section{Concluding remarks}
  We shall use the same notation as in the Introduction.
  In the quantum walk on the graph $X$, we say that there is
  {\it pretty good state transfer} (PGST) from vertex $a$ to vertex $b$ if for all
  $\epsilon>0$, there exists $t>0$ such that $\abs{U(t)_{b,a}}\geq 1-\epsilon$.
  It is well known ([Lemma 13.1]\cite{G12}, due to D. Morris) that in order to
  have PGST from $a$ to $b$, the two vertices must be strongly cospectral.
  It can also  be shown that the existence of PSGT between vertices
  is  an equivalence relation. Furthermore, in the case of Cayley graphs
  the equivalence class of the identity element is a subgroup, just as
  for the relation of strong cospectrality. We shall denote this
subgroup by $G_{pg}$ and the strongly cospectral subgroup by $G_{sc}$.
We know that the set of vertices in a Cayley graph  for which there is
perfect state transfer from the identity vertex  consists of  the identity vertex and at most one involution, so this set is a group $G_{pt}$ of order  at most 2. Thus we have for any Cayley graph,
  \begin{equation}
    G_{pt} \le G_{pg}\le G_{sc}.
  \end{equation}
  For graphs in general, Pal and Bhattacharjya \cite[Example 4.1]{PB} have given the example of $P_2\square P_3$, in which the PGST class has 4 elements.
  Since there is no absolute upper bound for $\abs{G_{sc}}$,  it is natural to  ask whether or not there is an absolute bound  for $\abs{G_{pg}}$.
  We have not determined $\abs{G_{pg}}$ for  the examples constructed in section~\ref{hetero}. As for the cubelike graphs in section~\ref{cubelike},
  Hermie Monterde has pointed out to me that for periodic graphs (which includes graphs, such as cubelike graphs, whose eigenvalues are integers) PGST is equivalent to perfect
  state transfer \cite[Proposition 1.4]{Pal}.
  Necessary and sufficient conditions for PGST are  given in \cite[Theorem 2]{BCGS}.
  
  \section*{Acknowledgements}
  I would like to thank Soffia Arnadottir and Chris Godsil for some fruitful
  discussions.  Soffia  also helped with some early computer calculations.
  Thanks also to Ada Chan from whom I first learned of the question on the size
  of a strong cospectrality class, and to Hermie Monterde for her helpful comments
  on an earlier version of this paper. Finally, I thank the referees for their
  thoughtful suggestions to improve the exposition.


\end{document}